\newcommand{\R}{{\mathbb R}}
\newcommand{\Z}{{\mathbb Z}}
\newcommand{\N}{{\mathbb N}}
\newtheorem{theo}{Theorem}
\newtheorem{coro}[theo]{Corollary}
\newtheorem{lemma}[theo]{Lemma}
\newenvironment{proofof}[1]{\noindent {\bf Proof of #1.}}{ \hfill\qed\\ }
\newcommand{\rset}[2] {\left\{ #1 \: \left| \: #2 \right. \! \right\} }
\newcommand{\lset}[2] {\left\{ \left. \! #1 \: \right| \: #2 \right\} }
\newcommand{\bi} {billiard}
\newcommand{\me} {measure}
\newcommand{\tr} {trajector}
\newcommand{\erg} {ergodic}
\newcommand{\sy} {system}
\newcommand{\hyp} {hyperbolic}
\newcommand{\pr} {probability}
\newcommand{\ra} {random}
\newcommand{\dsy} {dynamical system}
\renewcommand{\o} {orbit}
\newtheorem{rema}[theo]{Remark}
\newcommand{\sca} {scatterer}
\newcommand{\con} {configuration}
\newcommand{\bo} {\gamma}
\newcommand{\cs} {\Gamma}
\newcommand{\ta} {\mathcal{Q}}
\renewcommand{\l} {\ell}
\newcommand{\bn} {\mathbf{n}}
\newcommand{\ps} {\mathcal{M}}
\begin{document}

\title{Infinite-horizon Lorentz tubes and gases: recurrence and 
ergodic properties}

\author{Marco Lenci}
\address{Dipartimento di Matematica, Universit\`a di Bologna\\
Piazza di Porta San Donato 5, 40126 Bologna, Italy}
\email{marco.lenci@unibo.it}
\urladdr{http://www.dm.unibo.it/{\lower.7ex\hbox{\~{}}}lenci/} \date{}

\author{Serge Troubetzkoy}
\address{Centre de physique th\'eorique\\
Federation de Recherches des Unites de Mathematique de Marseille\\
Institut de math\'ematiques de Luminy and\\ 
Universit\'e de la M\'editerran\'ee\\ 
Luminy, Case 907, F-13288 Marseille Cedex 9, France}
\email{troubetz@iml.univ-mrs.fr}
\urladdr{http://iml.univ-mrs.fr/{\lower.7ex\hbox{\~{}}}troubetz/} \date{}

\begin{abstract}
  We construct classes of two-dimensional aperiodic \linebreak Lorentz
  systems that have infinite horizon and are `chaotic', in the sense
  that they are (Poincar\'e) recurrent, uniformly hyperbolic and
  ergodic, and the first-return map to any scatterer is $K$-mixing. In
  the case of the Lorentz tubes (i.e., Lorentz gases in a strip), we
  define general measured families of systems (\emph{ensembles}) for
  which the above properties occur with probability 1. In the case of
  the Lorentz gases in the plane, we define families, endowed with a
  natural metric, within which the set of all chaotic dynamical
  systems is uncountable and dense.
  \vspace{5pt} \\
  \noindent
  MSC 2010: 37D50, 37A40, 60K37, 37B20, 36A25.
\end{abstract}

\date{\vspace*{3pt}
July 12, 2011}

\maketitle 
\markboth{Marco Lenci, Serge Troubetzkoy}
{Lorentz tubes and gases}

\section{Introduction}
\label{sec-intro}

A Lorentz \sy\ is a \dsy\ of a point particle moving inertially in an
unbounded domain (in this paper we consider only planar domains)
endowed with an infinite number of dispersing (i.e., locally convex)
\sca s. When the particle hits a \sca, which is regarded as infinitely
massive, it undergoes an elastic collision: the angle of reflection
equals the angle of incidence.

The most popular such \sy\ is undoubtedly the Lorentz gas, devised by
Lorentz in 1905 \cite{Lo} to study the dynamics of an electron in a
crystal; the term `gas' was introduced later in the century, when
versions of the Lorentz model were used to give a statistical
description of the motion of a molecule in a gas.%
\footnote{To our knowledge, the first appearance of this model within
  the scope of the kinetic theory of gases is in a 1932 textbook of
  theoretical physics \cite{J}. The model is used briefly for the
  estimation of the number of collisions per unit time of a molecule
  in a gas; however, no mention of Lorentz is made. The first
  occurrence of the phrase `Lorentz gas' in the scientific literature
  seems to date back to 1941 \cite{Gr}. Curiously, the first occurrence
  of the phrase `Lorentzian gas' that we are aware of is found in the
  article preceding \cite{Gr} in the same issue of the same journal
  \cite{HIV}. Finally, it is interesting to notice that, although
  Lorentz's original papers \cite{Lo} are about electrons in a metal
  and not molecules in a gas, he treats the problem by deriving and
  solving a Boltzmann-like transport equation; see also \cite{K}.}

In the past century, Lorentz \sy s have been preferred models in the
fields of statistical physics, optics, acoustics, and generally
anywhere the diffusive properties of a chaotic motion were to be
investigated.  Throughout this history, for reasons of mathematical
convenience, the models that were studied most often and most deeply
were \emph{periodic} (i.e., the \con\ of \sca s was invariant for the
action of a discrete group of translations) and with \emph{finite
horizon} (i.e., the free flight was bounded above). Only recently
have aperiodic Lorentz \sy s come to the fore \cite{Le1, Le2, DSV,
CLS, SLDC, Tr2} (within the scope of \dsy s, that is, aperiodic
models had already been studied in other contexts; e.g., \cite{Ga,
BBS, BBP}).  However, while there is a considerable literature on
infinite-horizon periodic Lorentz gases, almost nothing is known, at
least to these authors, on aperiodic Lorentz \sy s with infinite
horizon.

In this note we consider 2D Lorentz gases and also Lorentz tubes, that
is, Lorentz \sy s confined to a strip of $\R^2$ \cite{CLS, SLDC}.  We
construct \bi s that have infinite horizon and possess the \erg\
properties that one would expect of these chaotic \sy s, such as \erg
ity and strong mixing properties. For these infinite-\me\ preserving
\dsy s, it turns out that (Poincar\'e) recurrence is not only a
necessary but also a sufficient condition for \erg ity; this is a
consequence of the \hyp\ structure that our \sy s can be shown to have
\cite{Le1, CLS}.  As for mixing, since a universally accepted
definition of mixing is not available in infinite \erg\ theory (see,
e.g., the discussion in \cite{Le3}), we characterize this aspect of
the dynamics by proving that certain first-return maps are
$K$-automorphisms (which implies strong mixing). This is again a
consequence of recurrence and \hyp ity.

Another important question that we aim to discuss is that of the
typicality of such \erg\ properties. One would expect that, when the
effective dimension is one (Lorentz tubes) or two (Lorentz gases),
recurrence, and all the stochastic properties that it entails, hold
for ``most'' \sy s (and one would expect the former case to be more
easily worked out than the latter).

We address this question by introducing the following two classes of
\bi s. A strip and the plane are partitioned into infinitely
many congruent \emph{cells}.  In each cell we place a \con\ of
\emph{dispersing} \sca s, i.e., a finite union of piecewise smooth
closed sets, whose smooth boundary components are seen as convex from
the exterior. The set of all the \emph{global} \con s of \sca s gives
rise to a family of \dsy s of the same type. A natural distance
between two \con s can be defined, which makes the above set a metric
space.  Furthermore, if the \con\ is chosen according to a \pr\ law,
the family becomes a \me d family, or an \emph{ensemble}, of \dsy
s. This structure is often referred to as a \emph{quenched random
  \dsy}.

In the case of the tubes we give fairly explicit sufficient conditions
for the above-mentioned \erg\ properties, thus proving that, for many
reasonable \ra\ laws on the ``disorder'' (including all non-degenerate
Bernoulli \me s), such properties hold almost surely in the ensemble;
we state these results in Section \ref{sec-tubes}. For the harder case
of the gases, we prove that the set of the \erg\ \sy s is uncountable
and dense within the whole space; this is described in Section
\ref{sec-gases}. Outlines of the proofs are given in Section
\ref{sec-proofs}.

It is worthwhile to mention that the \bi s we construct have infinite
but \emph{locally finite} horizon, that is, though the free flight has
no upper bound, no straight line exists that intersects no \sca s.

\medskip\noindent 
\textbf{Acknowledgments.} We thank A.~J.~Kox and J.~Lebowitz for
helping us with the history of the Lorentz gas, and two anonymous
referees for their careful reading of the first version of this paper.
M.~L.~is partially supported by the FIRB-``Futuro in Ricerca'' Project
RBFR08UH60 (MIUR, Italy). S.~T.~is partially supported by Projet ANR
``Perturbations'' (France).

\section{Lorentz tubes}
\label{sec-tubes}

Let $C$ denote the unit square, which will be henceforth referred to
as the \emph{cell}. Let $G^1$ denote an open segment along one of the
sides of $C$, say the left one, and $G^2$ the corresponding segment on
the opposite side (via the natural orthogonal projection). $G^1$ and
$G^2$ are called the \emph{gates} of $C$.

A \emph{local \con\ of \sca s} is a ``fat'' closed subset $\cs \subset
C$ (this means that $\cs$ is the closure of its interior) such that:
\begin{itemize}
\item[(A1)] $\partial \cs$ is made up of a finite number of
  $C^3$-smooth curves $\bo_i$, which may only intersect at their
  endpoints ($\bo_i$ is always considered a closed set).
\item[(A2)] $\partial C \setminus (G^1 \cup G^2) \subset \partial
  \cs$; and $G^1, G^2$ do not intersect $\partial \cs$.
\item[(A3)] Either $\bo_i$ is part of $\partial C$ or its curvature is
  bounded below by a positive constant (with the convention that
  positive curvature means that $\bo_i$ bends towards the inside of
  $\cs$).
\item[(A4)] The angle formed by $\bo_i$ and each intersecting $\bo_j$
  (or $G^j$) is non-zero.
\end{itemize}

\begin{rema}
  Observe that \emph{(A1)-(A3)} imply that the flat parts of $\partial
  \cs$ are `external' boundaries, in the sense that they can never be
  reached by a particle (the contrary would violate the hypothesis on
  the ``fatness'' of $\cs$). So we are dealing with dispersing, not
  semi-dispersing, \bi s.
\end{rema}

We consider a finite number $\cs^1, \cs^2, \ldots, \cs^m$ of such
local \con s (see Figs.~\ref{fig-t-b} and \ref{fig-t-nb}).

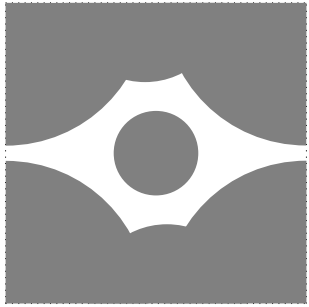
\begin{figure}[th]

\centering

\begin{tikzpicture}[scale=2]

\foreach \i in {-1,1}
\draw [dotted] (-1,\i) -- ( 1,\i);

\foreach \j in {-1,1}
\draw [dotted] (\j,-1) -- (\j,1);

\fill[color=gray] (-1,1) -- (-1,0.05) arc (-90:0:27pt) -- cycle;    
\fill[color=gray] (1,1) -- (0.05,1) arc (180:270:27pt) -- cycle;    
\fill[color=gray] (-0.6,1) -- (-0.6,1) arc (180:360:15pt) -- cycle;

\fill[color=gray] (-1,-1) -- (-0.05,-1) arc (0:90:27pt) -- cycle;    
\fill[color=gray] (1,-1) -- (1,-0.05) arc (90:180:27pt) -- cycle;    
\fill[color=gray] (0.6,-1) -- (0.6,-1) arc (0:180: 15pt) --cycle;

\fill[color=gray] (0,0) circle (8pt);  
  
\end{tikzpicture}

    \caption{A blocking cell for the Lorentz tube}
    \protect\label{fig-t-b}

\end{figure}

\begin{figure}[th]

\centering

\begin{tikzpicture}[scale=2]

\foreach \i in {-1,1}
\draw [dotted] (-1,\i) -- ( 1,\i);

\foreach \j in {-1,1}
\draw [dotted] (\j,-1) -- (\j,1);

\fill[color=gray] (-0.6,1) -- (-0.6,1) arc (180:360:15pt) 
-- (-1,1) -- (-1,0.05) arc (-90:-32:27pt) -- cycle;    
\fill[color=gray] (1,1) -- (0.05,1) arc (180:270:27pt) -- cycle;    

\fill[color=gray] (0.6,-1) -- (0.6,-1) arc (0:180: 15pt) --cycle;
\fill[color=gray] (-1,-1) -- (-0.05,-1) arc (0:90:27pt) -- cycle;    
\fill[color=gray] (1,-1) -- (1,-0.05) arc (90:180:27pt) -- cycle;    

\end{tikzpicture}

    \caption{A non-blocking cell for the Lorentz tube}
    \protect\label{fig-t-nb}

\end{figure}
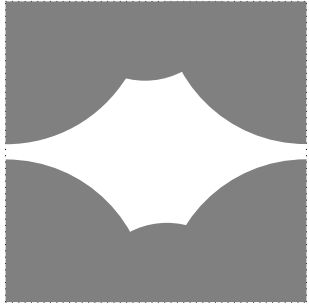

A Lorentz tube (LT) is a chain of cells $C_n$ ($n \in \Z$) such that
$G_n^2$, the right gate of $C_n$, coincides with $G_{n+1}^1$, the left
gate of $C_{n+1}$. More precisely, call $C_n := [n, n+1] \times [0,1]$
the particular copy of $C$ immersed in $\R^2$ as indicated, and denote
$\Omega := \{1, 2, \ldots, m \}$.  Then, for $\l := ( \l_n )_{n \in
\Z} \in \Omega^\Z$, we define the \emph{\bi\ table}
\begin{equation}
  \ta = \ta_\l := \bigcup_{n \in \Z} C_n \setminus \cs_n^{\l_n},
\end{equation}
where $\cs_n^{\l_n}$ is the \con\ $\cs^{\l_n}$ translated to the cell
$C_n$ (see Fig.~\ref{fig-tube}). The collection $( \cs_n^{\l_n} )_{n
\in \Z}$---equivalently $\l$---is called the \emph{global \con\ of
\sca s}.

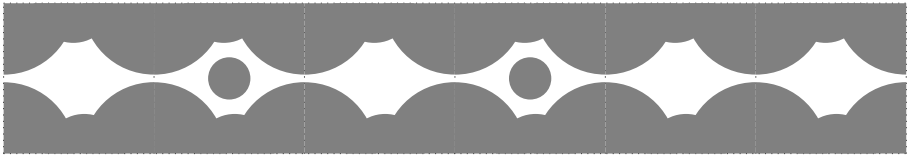
\begin{figure}[t]
\centering

\begin{tikzpicture}[scale=1]

\foreach \i in {-1,1}
\draw [dotted] (-7,\i) -- (5,\i);

\foreach \j in {-7,-5,-3,-1,1,3,5}
\draw [dotted] (\j,-1) -- (\j,1);

\foreach \j in {-7,-5,-3,-1,1,3}
\fill[color=gray] (\j,1) -- (\j,0.05) arc (-90:0:27pt) -- cycle;
\foreach \j in {-7,-5,-3,-1,1,3}
\fill[color=gray] (\j +  2,1) -- ( \j + 1.05,1) arc (180:270:27pt) -- cycle;
\foreach \j in {-7,-5,-3,-1,1,3}
\fill[color=gray] (\j+0.4,1) -- (\j+0.4,1) arc (180:360:15pt) -- cycle;

\foreach \j in {-7,-5,-3,-1,1,3}
\fill[color=gray] (\j,-1) -- (\j+1-0.05,-1) arc (0:90:27pt) -- cycle;
\foreach \j in {-7,-5,-3,-1,1,3}
\fill[color=gray] (\j+2,-1) -- (\j+2,-0.05) arc (90:180:27pt) -- cycle;
\foreach \j in {-7,-5,-3,-1,1,3}
\fill[color=gray] (\j+1.6,-1) -- (\j+1.6,-1) arc (0:180: 15pt) --cycle;

\fill[color=gray] (0,0) circle (8pt);
\fill[color=gray] (-4,0) circle (8pt);

\end{tikzpicture}

   \caption{A Lorentz tube}
   \protect\label{fig-tube}

\end{figure}

We henceforth say `the LT $\l$' to mean both the table $\ta_\l$ and
the \bi\ dynamics defined on it. By this we mean, precisely, the \dsy\
$(\ps_\l, T_\l, \mu_\l)$---more concisely, $(\ps, T, \mu)$---where:
\begin{itemize}
\item $\ps$ is the collection of all the \emph{line elements} of the
  dynamics, i.e., all the position-velocity pairs $(q,v)$, where $q
  \in \partial \ta$ and $v$ is a unit vector based in $q$ and pointing
  toward the interior of $\ta$. $(q,v)$ is meant to represent the
  dynamical variables of the particle right after a collision ($v$ can
  be chosen unitary because in this Hamiltonian \sy\ the conservation
  of energy equals the conservation of speed).
\item $T$ is the map that takes $(q,v)$ into the next post-collisional
  line element $(q',v')$, along the \bi\ \tr y of $(q,v)$; this map
  fails to be well defined only at a negligible set of line elements,
  which are called \emph{singular}, cf.\ below. $T$ is usually called
  the \emph{(standard) \bi\ map}.
\item $\mu$ is the invariant \me\ induced on the Poincar\'e section
  $\ps$ by the Liouville \me; it is well known that $d\mu(q,v) =
  \langle n_q, v \rangle dq dv$, where $n_q$ is inner unit normal to
  $\partial \ta$ in $q$.  It is easy to verify that the set of all
  singular points in phase space is null w.r.t.~$\mu$ Finally, as is
  evident, $\mu( \ps ) < \infty$ if and only if the total length of
  $\partial \ta$ is finite.
\end{itemize}
Notice that, by the definition of $\ta$, a \tr y that intersects a
gate $G_n^i$ crosses it. (The \tr y that runs along the segment
$G_n^i$ will always be considered singular.)

We assume that there are two types of local \con s: the
\emph{blocking} \con s, corresponding to the index set $\Omega_B := \{
1, 2, \ldots, m' \}$ ($m' < m$), and the \emph{non-blocking}
confugurations, corresponding to the set $\Omega_{NB} := \{ m' + 1, m'
+ 2, \ldots, m \}$. The former type verifies the following condition:
\begin{itemize}
\item[(A5)] If $a \in \Omega_B$, any \bi\ \tr y that enters a cell
  with \con\ $\cs^a$ must experience a collision before leaving it
  (Fig.~\ref{fig-t-b}).
\end{itemize}

An example of a \con\ $\cs^a$, with $a \in \Omega_{NB}$, is shown in
Fig.~\ref{fig-t-nb}.

Clearly, an LT $\l \in \Omega_B^\Z$ has finite horizon, i.e., the free
flight between two successive collisions has an upper bound. An
arbitrary LT in $\Omega^\Z$ might not have this property.

A word $a_1 a_2 \cdots a_l$, with $a_i \in \Omega$, is called a
\emph{factor} of $\l \in \Omega^\Z$ if there exists $n$ such that
$\l_{n+i} = a_i$, for $i = 1, 2, \dots, l$. The factor is called
blocking (respectively, non-blocking) if all the $a_i$ belong to
$\Omega_B$ (respectively, $\Omega_{NB}$); it is called constant if
they are all equal. The positive integer $l$ is called the length of
the factor.

The technique used by Troubetzkoy in \cite{Tr1}, when applied to the
present models, easily implies that any LT in $\Omega^\Z$ which has
arbitrarily long blocking constant factors (both forwards and
backwards) is recurrent. Furthermore, Cristadoro, Lenci and Seri have
shown that, for LTs in $\Omega_B^\Z$, \erg ity and recurrence are
equivalent \cite{CLS}, and they imply $K$-mixing for suitable return
maps (this last result is actually stated in \cite{SLDC} but its proof
applies as well to the models of \cite{CLS}).

Our results on the Lorentz tubes are a combination and an extension of
these ideas.  To describe them we introduce some notation that will
appear obscure at first, but will be explained shortly.  For fixed
$\l$, define $g_0^+ := g_0^- := 0$ and, recursively for $j > 0$,
\begin{equation}
  \label{gj-pm}
  g_{j+1}^\pm := \min \rset{k \in \Z^+} {\l_{\pm \sum_{i=0}^j g_i^\pm 
  \,\pm\, k} \in \Omega_B}.
\end{equation}
In other words, $g_j^+ - 1$ (respectively, $g_j^- - 1$) is the length
of the $j^\mathrm{th}$ non-blocking factor to the right (respectively,
to the left) of the cell $C_0$ (with the convention that between two
blocking cells there is a non-blocking factor of length 0). Notice
that, if $\l_0 \in \Omega_B$, this coding reflects exactly the
sequence of non-blocking factors of $\l$. Otherwise, there is a little
difference which, see Remark \ref{rmk1} below, does not affect the
upcoming statement.

\begin{theo} 
  \label{thm1}
  Assume \emph{(A1)-(A5)}. For any $\l \in \Omega^\Z$ which has
  arbitrarily long blocking constant factors, both forward and
  backwards, and such that both sequences $(g_j^\pm)_{j \in \N}$ grow
  at most like a power-law, the corresponding \dsy\ $(\ps, T, \mu)$
  is:
  \begin{itemize}
  \item[(a)] uniformly \hyp, in the sense that local stable and
    unstable manifolds exist at a.e.\ point of $\ps$, and the
    corresponding ($T$-invariant) laminations are absolutely
    continuous w.r.t.\ $\mu$ (see, e.g., \cite{Le1} for details); the
    contraction coefficient of $T^n$, along the stable direction, is
    bounded above by $C \lambda^n$ and its expansion coefficient,
    along the unstable direction, is bounded below by $C^{-1}
    \lambda^{-n}$, where $C>0$ and $\lambda \in (0,1)$ are uniform
    constants;
  \item[(b)] recurrent in the sense of Poincar\'e, i.e., given a
    measurable $A \subset \ps$, the \o\ of a.e.\ point in $A$ comes
    back to $A$ infinitely many times;
  \item [(c)] \erg, that is, if $T(A) = A$ mod $\mu$, then either $A$
    or its complement has \me\ zero.
  \end{itemize}
  Furthermore, the first-return map to any smooth component of
  $\partial \ta$, of the type $\bo_i$ as in \emph{(A1)}, is K-mixing.
\end{theo}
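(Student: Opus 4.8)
The plan is to establish the four assertions in the order hyperbolicity, recurrence, ergodicity, $K$-mixing, exploiting that the two substantive hypotheses play cleanly separated roles. The existence of arbitrarily long blocking constant factors, both forwards and backwards, is what produces recurrence; the power-law growth of the gap sequences $(g_j^\pm)$ is what keeps the infinite (but locally finite) horizon under control when one passes from the infinitesimal hyperbolic picture to the global stochastic properties. Throughout I would use that there are only finitely many cell types $\cs^1,\dots,\cs^m$, so that every relevant geometric quantity---the lower curvature bound from (A3), the transversality (no-cusp) bound from (A4), and so on---is bounded away from its degenerate value by a constant independent of the cell.

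For part (a) I would set up the standard invariant cone field of dispersing billiards in post-collisional coordinates. The Remark guarantees that we are dealing with genuinely dispersing (not merely semi-dispersing) tables, so (A3) gives a uniform lower bound on the curvature at every collision and (A4) excludes cusps; these are exactly the ingredients that make a dispersing wave front stay dispersing, so that the unstable cone is strictly invariant and unstable vectors are expanded at a uniform exponential rate, uniform precisely because there are only finitely many cell types. A long free flight, the sole manifestation of the infinite horizon at this infinitesimal level, can only increase the expansion, so iterating over $n$ collisions yields the announced bounds: contraction of $T^n$ along the stable direction bounded above by $C\lambda^{n}$ and expansion along the unstable direction bounded below by $C^{-1}\lambda^{-n}$, with uniform $C>0$ and $\lambda\in(0,1)$. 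The existence of the local stable and unstable manifolds and the absolute continuity of the two laminations then follow from the distortion theory for aperiodic dispersing billiards developed in \cite{Le1}.

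Part (b) is essentially quoted: under the hypothesis of arbitrarily long blocking constant factors in both directions, Troubetzkoy's construction in \cite{Tr1}, transcribed to the present tables as indicated in the text preceding the statement, produces the nested trapping regions that force Poincar\'e recurrence of $(\ps,T,\mu)$. For part (c) I would follow the route already used for the finite-horizon models of \cite{CLS}: first establish \emph{local} ergodicity---almost every point has a neighborhood contained, mod $0$, in a single ergodic component---via the fundamental theorem of Sinai and Chernov, and then glue the components together using recurrence, which under the hyperbolicity of (a) upgrades local ergodicity to global ergodicity. The verification of the hypotheses of the local ergodic theorem is the main obstacle, and this is exactly where the power-law growth of $(g_j^\pm)$ enters. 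With infinite horizon the singularity set of $T$ acquires infinitely many components, coming from trajectories that graze scatterers after crossing long non-blocking factors; one must bound the number and the alignment of these singularity curves. Polynomial growth of the gap sequences translates into polynomial (hence sub-exponential) complexity of the singularity set and into the required continuation and alignment estimates, so that the fundamental theorem applies.

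Finally, the $K$-mixing statement follows once (a)--(c) are in hand. The first-return map to a smooth component $\bo_i$ is the first-return map of a recurrent, uniformly hyperbolic, ergodic system to a set of finite measure, hence is itself a finite-measure-preserving, uniformly hyperbolic and ergodic transformation for which the local ergodic theorem continues to hold. For such maps the argument of \cite{SLDC}---whose proof, as already noted, applies equally to the configurations of \cite{CLS}, and a fortiori to the present ones once the singularity set is controlled as in part (c)---shows that the Pinsker partition is trivial, so the return map is a $K$-automorphism, and in particular strongly mixing. The only delicate point is once again the infinite horizon, but it is absorbed by the same complexity bounds established for the ergodicity argument.
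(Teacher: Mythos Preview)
Your overall architecture matches the paper's: recurrence via the trapping argument of \cite{Tr1}, hyperbolicity and local ergodicity via the machinery of \cite{Le1, CLS, LW}, and $K$-mixing via \cite{SLDC}. The paper even formulates the singularity-complexity bound you allude to as an explicit lemma: there exist $C,\alpha>0$ such that, for any cell $C_n$ and any $t\in\Z^+$, the set $\bigcup_{j=0}^t T^j(\ps_n)$ meets at most $Ct^\alpha$ singularity lines. The proof is exactly your observation that polynomial growth of $(g_j^\pm)$ bounds how far a trajectory can travel in $t$ collisions, hence how many cells (and how many grazing/corner singularities) it can ``see''.

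There is, however, one genuine misplacement in your outline. You treat part~(a) as a purely infinitesimal, cone-field affair in which the infinite horizon ``can only increase the expansion'', deferring the power-law hypothesis entirely to part~(c). The paper is explicit that this is not so: while the cone invariance and the expansion/contraction rates are indeed local, the \emph{existence} of local stable and unstable manifolds at $\mu$-a.e.\ point requires a non-local input, namely that almost every orbit approaches the singular set no faster than a negative power of time. This is precisely what the polynomial singularity-count lemma provides, and it is needed already for~(a), not only for the Tail Bound in the local ergodic theorem. So the power-law growth of $(g_j^\pm)$ is not cleanly confined to~(c) as you suggest; it enters the hyperbolicity argument as well, and your appeal to \cite{Le1} for the invariant manifolds is only legitimate once that lemma is in hand.
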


\begin{rema} 
  \label{rmk1}
  In the above theorem, both hypotheses are shift-invariant in
  $\Omega^\Z$, as a relocation of the origin on $\l$ will produce at
  most a shift in $(g_j^\pm)$ and a change of the first few
  terms. Therefore, when convenient---to do away with the problem
  mentioned in the previous paragraph---we make the convention that
  any $\l$ is shifted to the left the minimum amount of times for
  $C_0$ to have a blocking \con.
\end{rema} 

An LT as described in Theorem \ref{thm1} need not have infinite
horizon. To start with, it is necessary that the non-blocking \con s
let free \tr ies through, something that was not postulated. But more
is needed as well. For instance, if one designs the local \con s
$\cs^a$, with $a \in \Omega_{NB}$, so that, for any $l \in \Z^+$, all
non-blocking factors of length $l$ admit a free flight of length $\ge
l$ (and this is easy, cf.~Fig.~\ref{fig-t-nb}), then a necessary and
sufficient condition for an LT to have infinite horizon is that at
least one of the two sequences $(g_j^\pm)$ is unbounded. Or one might
ask for something less: for example, that just the \emph{constant}
non-blocking factors admit long free flights. This is enough to
guarantee that very many LTs have an infinite horizon, cf.\ Corollary
\ref{cor1}.

An important question is whether the LTs to which Theorem \ref{thm1}
applies are typical in $\Omega^\Z$. This of course depends on the
definition of `typical'. One strong notion of typicality is the
\me-theoretic notion, provided a probability \me\ $\Pi$ is put on
$\Omega^\Z$ (endowed with the natural $\sigma$-algebra generated by
the cylinders). In this case, $(\Omega^\Z, \Pi)$ becomes a \me d
family (in jargon, an \emph{ensemble}) of \dsy s, which we call
\emph{quenched \ra\ Lorentz tube}.

Many reasonable \me s $\Pi$ ensure that the assertions of Theorem
\ref{thm1} hold $\Pi$-almost surely. Here is an example:

\begin{coro} 
  \label{cor1} 
  Let $(p_1, p_2, \ldots, p_m)$ be a stochastic vector, with $p_a >
  0$, and let $\Pi$ be the Bernoulli \me\ on $\Omega^\Z$ relative to
  that vector (i.e., the unique \me\ for which $\Pi(\l_n = a) = p_a$,
  for all $n$). Then $\Pi$-a.e.\ LT in $\Omega^\Z$ has the properties
  stated in Theorem \ref{thm1}. Furthermore, if any non-blocking
  constant factor of length $l$ ($\forall l \in \Z^+$) admits a free
  flight of length $\ge l$, then a.e.\ LT has infinite horizon as
  well.
\end{coro}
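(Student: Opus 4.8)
The plan is to show that, for $\Pi$-almost every $\l \in \Omega^\Z$, the two hypotheses of Theorem~\ref{thm1} are satisfied; the claimed chaotic properties then follow at once by applying that theorem pointwise. Set $q := \Pi(\l_0 \in \Omega_B) = \sum_{a \in \Omega_B} p_a$. Since every $p_a > 0$ and $1 \le m' < m$, we have $q \in (0,1)$, and likewise $\Pi(\l_0 \in \Omega_{NB}) = 1-q \in (0,1)$. Everything reduces to two almost-sure statements: the existence of arbitrarily long blocking constant factors in both directions, and the at-most-power-law growth of $(g_j^\pm)$.

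First I would verify the existence of arbitrarily long blocking constant factors. Fix a blocking symbol $a \in \Omega_B$ and an integer $L \ge 1$, and consider the disjoint windows $W_k := \{kL+1, \dots, kL+L\}$ for $k \ge 0$. The events $E_k := \{\l_i = a \text{ for all } i \in W_k\}$ are independent, each of probability $p_a^L > 0$, so $\sum_k \Pi(E_k) = \infty$ and the second Borel--Cantelli lemma gives that infinitely many $E_k$ occur almost surely; that is, $\l$ contains a constant block of symbol $a$ and length $L$ arbitrarily far to the right. Intersecting over the countably many $L \in \N$, over the analogous windows indexed by $k<0$, and over both directions, we obtain a full-measure set on which $\l$ has arbitrarily long blocking constant factors both forward and backward.

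The crux is the growth of the gap sequences $(g_j^\pm)$. By Remark~\ref{rmk1} I may assume $\l_0 \in \Omega_B$; then the successive blocking positions form a renewal process whose inter-arrival times are precisely the $g_j^\pm$, and since the coordinates $(\l_n)$ are i.i.d.\ these gaps are themselves i.i.d.\ geometric, $\Pi(g_j^\pm = k) = (1-q)^{k-1} q$ for $k \ge 1$. Hence $\Pi(g_j^\pm > c \log j) = (1-q)^{\lfloor c \log j \rfloor}$ is bounded above by a constant multiple of $j^{-c|\log(1-q)|}$; choosing $c > 1/|\log(1-q)|$ makes $\sum_j \Pi(g_j^\pm > c\log j)$ convergent, so the first Borel--Cantelli lemma yields $g_j^\pm \le c \log j$ for all large $j$, almost surely. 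Thus $g_j^\pm = O(\log j)$, which grows slower than any power of $j$; intersecting this full-measure event (for both signs) with the one of the previous paragraph and invoking Theorem~\ref{thm1} proves the first assertion.

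Finally, for infinite horizon I would repeat the Borel--Cantelli argument of the second paragraph with a non-blocking symbol $a \in \Omega_{NB}$, which exists since $m' < m$ and has $p_a > 0$: almost surely $\l$ contains non-blocking constant factors of every length $l$. Under the additional hypothesis each such factor admits a free flight of length $\ge l$, so letting $l \to \infty$ shows the free flight is unbounded and the LT has infinite horizon. The only point requiring genuine care is the identification of the $g_j^\pm$ as i.i.d.\ geometric variables (together with the $\l_0$ edge case handled by Remark~\ref{rmk1}) and the ensuing summability estimate that upgrades the geometric tail into logarithmic, hence sub-power-law, growth; the remaining steps are routine applications of the Borel--Cantelli lemmas to the Bernoulli measure.
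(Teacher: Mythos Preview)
Your proof is correct and follows essentially the same route as the paper's: identify the $g_j^\pm$ as i.i.d.\ geometric random variables (using Remark~\ref{rmk1} to normalize $\l_0 \in \Omega_B$), apply the first Borel--Cantelli lemma to their tail to bound the growth, and note that a non-degenerate Bernoulli measure almost surely produces arbitrarily long constant factors of any fixed symbol. The only cosmetic difference is that you choose the threshold $c\log j$ and obtain $g_j^\pm = O(\log j)$, whereas the paper uses the cruder threshold $j$ (observing $\Pi(g_j^\pm \ge j) = (1-q)^{j-1}$ is summable) to get $g_j^\pm \le Kj$; both are more than enough for the power-law hypothesis of Theorem~\ref{thm1}.
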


\begin{proofof}{Corollary \ref{cor1}} 
  Let $p_B := \sum_{a=1}^{m'} p_a$ and $p_{NB} := \sum_{a=m' + 1}^{m}
  p_a$.  By hypothesis, $p_B, p_{NB} \in (0,1)$. It is evident that,
  w.r.t.~$\Pi$, the \ra\ variables $(g_j^\pm)$ are i.i.d., with
  distribution $\Pi( g_j^\pm = k) = p_B (p_{NB})^{k-1}$ $(k \in
  \Z^+)$.
  
  We claim that, for $\Pi$-a.e.~$\l$, there exists $K = K(\l)$ such
  that $g_j^\pm \le K j$. In fact, observe that
  \begin{equation}
     \Pi( g_j^\pm \ge j ) = (p_{NB})^{j - 1}.
  \end{equation}
  This implies that the probabilities of the `events' $\{ g_j^\pm \ge
  j \}$ form a summable sequence. Thus, by Borel-Cantelli, for
  a.e.~$\l$, the inequality $g_j^\pm / j \ge 1$ is verified only for a
  finite number of $j$'s. Setting $K := \max_j (g_j^\pm / j)$ proves
  the claim.
  
  To finish the proof of Corollary \ref{cor1}, observe that, for a
  non-degenerate Bernoulli \me, a.e.~$\l$ contains arbitrarily long
  blocking and non-blocking constant factors.
\end{proofof}

\section{Lorentz gases}
\label{sec-gases}

We now turn to the truly two-dimensional case. We tile the plane with
$\Z^2$ copies of the unit square $C$.  In this case $C$ is endowed
with four gates: $G^1$ and $G^2$, congruent and opposite open segments
(say, the left and the right gate, respectively); $G^3$ and $G^4$,
again congruent and opposite open segments (the lower and the upper
gates).

Apart from this, we have the same structure as in Section
\ref{sec-tubes}: a finite number of local \con s indexed by the set
$\Omega = \Omega_B \cup \Omega_{NB}$, where $\Omega_B$ denotes the
blocking and $\Omega_{NB}$ the non-blocking \con s, as in
Figs.~\ref{fig-g-b} and \ref{fig-g-nb}.  Again, these sets verify five
assumptions, (A1), (A3), (A4), (A5), and the counterpart of (A2):

\begin{figure}[t]
\centering

\begin{tikzpicture}[scale=2]

\foreach \i in {-1,1}
\draw [dotted] (-1,\i) -- ( 1,\i);

\foreach \j in {-1,1}
\draw [dotted] (\j,-1) -- (\j,1);

\fill[color=gray] (-1,1) -- (-1,0.05) arc (-90:0:27pt) -- cycle;    
\fill[color=gray] (-1,-1) -- (-0.05,-1) arc (0:90:27pt) -- cycle;    
\fill[color=gray] (1,-1) -- (1,-0.05) arc (90:180:27pt) -- cycle;    
\fill[color=gray] (1,1) -- (0.05,1) arc (180:270:27pt) -- cycle;    

\fill[color=gray] (0,0) circle (8pt);  

\end{tikzpicture}

    \caption{A blocking cell for the Lorentz gas}
    \protect\label{fig-g-b}

\end{figure}
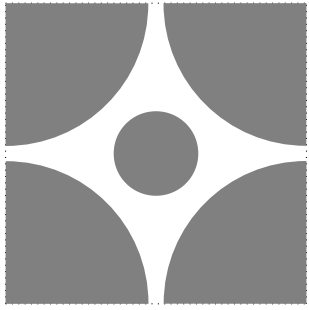

\begin{figure}[t]
\centering

\begin{tikzpicture}[scale=2]

\foreach \i in {-1,1}
\draw [dotted] (-1,\i) -- ( 1,\i);

\foreach \j in {-1,1}
\draw [dotted] (\j,-1) -- (\j,1);

\fill[color=gray] (-1,1) -- (-1,0.05) arc (-90:0:27pt) -- cycle;    
\fill[color=gray] (-1,-1) -- (-0.05,-1) arc (0:90:27pt) -- cycle;    
\fill[color=gray] (1,-1) -- (1,-0.05) arc (90:180:27pt) -- cycle;    
\fill[color=gray] (1,1) -- (0.05,1) arc (180:270:27pt) -- cycle;    

\end{tikzpicture}

    \caption{A non-blocking cell for the Lorentz gas}
    \protect\label{fig-g-nb}

\end{figure}

\begin{itemize}
\item[(A2$^\prime$)] $\partial C \setminus (G^1 \cup \ldots \cup G^4)
  \subset \partial \cs$; and $G^1, \ldots, G^4$ do not intersect
  $\partial \cs$.
\end{itemize}

A global \con\ is the collection $\l := ( \l_\bn)_{\bn \in \Z^2} \in
\Omega^{\Z^2}$, with $\bn := (n_1, n_2)$, and the \bi\ table is
\begin{equation}
  \ta = \ta_\l := \bigcup_{\bn \in \Z^2} C_\bn \setminus \cs_\bn^{\l_\bn},
\end{equation}
where $C_\bn := [n_1, n_1 + 1] \times [n_2, n_2 + 1]$, and
$\cs_\bn^{\l_\bn}$ is the \con\ $\cs^{\l_\bn}$ translated to
$C_\bn$. We call $\ta$ (and the \bi\ map thereon) a Lorentz gas
(LG)---see a realization in Fig.~\ref{fig-gas}.

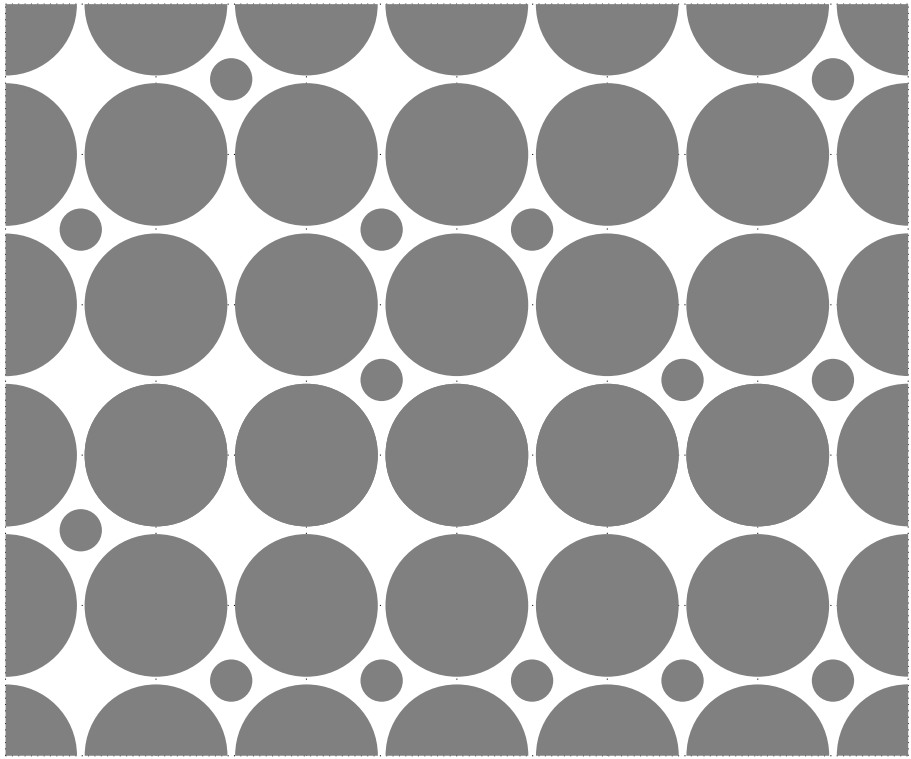
\begin{figure}[t]
\centering  

\begin{tikzpicture}[scale=1]

\foreach \i in {-3,-1,1,3,5,7}
\draw [dotted] (1,\i) -- ( 13,\i);

\foreach \j in {1,3,5,7,9,11,13}
\draw [dotted] (\j,-3) -- (\j,7);

\foreach \i in {,-1,1,3,5}
\foreach \j in {3,5,7,9,11}
\fill[color=gray] (\j,\i) circle (27pt);

\foreach \i in {1}
\foreach \j in {-2,0,2,4}
\fill[color=gray](\i,\j) -- (\i,\j+0.05) arc (-90:90:27pt) --cycle;

\foreach \i in {13}
\foreach \j in {0,2,4,6}
\fill[color=gray] (\i,\j) -- (\i,\j-0.05) arc (90:270:27pt) --cycle;

\foreach \i in {2,4,6,8,10}
\foreach \j in {7}
\fill[color=gray] (\i,\j) --  (\i+0.05,\j) arc (180:360:27pt) --cycle;

\foreach \i in {4,6,8,10,12}
\foreach \j in {-3}
\fill[color=gray](\i,\j) -- (\i-0.05,\j) arc (0:180:27pt) --cycle;

\fill[color=gray] (1,-3) -- (2-0.05,-3) arc (0:90:27pt) --cycle;
\fill[color=gray] (13,-3) -- (13,-2-0.05) arc (90:180:27pt) --cycle;
\fill[color=gray] (1,7) -- (1,6.05) arc (270:360:27pt) --cycle;
\fill[color=gray] (13,7) -- (12.05,7) arc (180:270:27pt) --cycle;

\fill[color=gray] (2,0) circle (8pt);
      
\foreach \i in {2,6,8}  \fill[color=gray] (\i,4) circle (8pt);
           
\foreach \i in {4,6,8,10,12}    
\foreach \j in {-2}
\fill[color=gray] (\i,\j) circle (8pt); 
    
\foreach \i in   {6,10,12} 
\foreach \j in {2} 
\fill[color=gray] (\i,\j) circle (8pt);
     
\foreach \i in   {4,12} 
\foreach \j in {6} 
\fill[color=gray] (\i,\j) circle (8pt);
  
\end{tikzpicture}

    \caption{A Lorentz gas}
    \protect\label{fig-gas}

\end{figure}

A word $a_1 a_2 \cdots a_l$ is called a \emph{horizontal factor} of
$\l \in \Omega^{\Z^2}$ if there exists $\bn = (n_1, n_2)$ such that
$\l_{n_1 + i, n_2} = a_i$, for $i = 1, 2, \dots, l$. The analogous
definition is given for a vertical factor. As in Section
\ref{sec-tubes}, a factor is called non-blocking if $a_i \in
\Omega_{NB}$ and constant if $a_i = a$, for all $i$.

Since we are interested in infinite-horizon \bi s, we discuss a
sufficient condition for obtaining this property. It will be seen in
Section \ref{sec-proofs} that the LGs that we construct possess
arbitrarily long, both horizontal and vertical, non-blocking constant
factors.  They have infinite horizon if all non-blocking, say,
horizontal factors of length $l$ ($\forall l \in \Z^+$) admit a free
flight of length $\ge l$.

In the two-dimensional setting no one has succeeded in proving that
recurrence is a typical property in a \me-theoretic sense (this is
actually an important open problem, cf.\ \cite{Le2, CD}). The known
results are with respect to a topological notion of typicality. For
this, we make $\Omega^{\Z^2}$ a metric space by endowing it with the
distance
\begin{equation}
  \label{dist}
  \mathrm{dist}(\l, \l') := \sum_{\bn \in \Z^2} 2^{-|n_1| -|n_2|}  \left|
  \l_\bn - \l'_\bn \right|.
\end{equation}

It was shown by Lenci that, relative to the above metric, the
Baire-typical LG in $\Omega_B^{\Z^2}$ is recurrent \cite{Le2} and
\erg\ \cite{Le1}. On the other hand, Troubetzkoy has shown that, under
the assumption that long factors of non-blocking \con s admit long
free flights, the Baire-typical LG in $\Omega^{\Z^2}$ is recurrent and
has infinite horizon \cite{Tr2}.  Once again, we combine and extend
these ideas to show that a great number of infinite-horizon LGs are
recurrent and chaotic in the sense of Theorem \ref{thm1}.

\begin{theo}
  \label{thm2}
  Assuming \emph{(A1), (A2$^\prime$), (A3)-(A5)}, the metric space
  $\Omega^{\Z^2}$ contains a dense uncountable set of LGs that are
  \hyp, recurrent and \erg\ in the sense of Theorem \ref{thm1}, and
  such that the return map to any smooth component of the type $\bo_i$
  is K-mixing. Furthermore, if, for all $l \in \Z^+$, any non-blocking
  horizontal constant factor of length $l$ admits a free flight of
  length $\ge l$, then those LGs have infinite horizon as well.
\end{theo}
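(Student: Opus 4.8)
My plan is to reduce the entire statement to a single point---recurrence---and then to exhibit explicitly an uncountable dense set of LGs that are recurrent. First observe that \hyp ity, part (a), will hold for every table I construct: since there are only finitely many local \con s $\cs^1,\dots,\cs^m$, the data controlled by (A1),(A3),(A4)---curvatures bounded away from $0$ and $\infty$ and collision angles bounded away from $0$---are uniform over the whole table $\ta$, regardless of $\l$; moreover the blocking frames introduced below make the horizon locally finite (any bi-infinite line must cross some frame and therefore meet a \sca). These are exactly the ingredients under which the cone-field construction of \cite{Le1} yields a.e.\ local stable and unstable manifolds with absolutely continuous laminations and uniform constants $C,\lambda$ over $\ps$. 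Granting recurrence, part (b), both \erg ity, part (c), and the K-mixing of the first-return map to any smooth component $\bo_i$ follow \emph{verbatim} from the implications ``recurrence $+$ \hyp ity $\Rightarrow$ \erg ity'' and ``$\Rightarrow$ K-mixing'' proved in \cite{Le1, CLS, SLDC}. So the only genuinely new task is to produce uncountably many recurrent LGs, densely.

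To build them I would imitate the trapping mechanism of Troubetzkoy \cite{Tr2}. Fix a target $\l^\ast\in\Omega^{\Z^2}$ and $\eps>0$, and choose $N$ so large that any $\l$ agreeing with $\l^\ast$ on the central block $\{\max(|n_1|,|n_2|)\le N\}$ has $\mathrm{dist}(\l,\l^\ast)<\eps$; this is possible because $\sum_{\max(|n_1|,|n_2|)>N}2^{-|n_1|-|n_2|}\to0$. Outside the block I fill $\Z^2$ with concentric square \emph{frames}: for a strictly increasing sequence $N<R_1<R_2<\cdots$ I make every cell on the square $\Lambda_k:=\{\max(|n_1|,|n_2|)=R_k\}$ a blocking \con\ from $\Omega_B$, and I fill each horizontal and vertical run between consecutive frames with a single fixed non-blocking type from $\Omega_{NB}$. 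This produces simultaneously (i) arbitrarily long blocking constant factors in all four axial directions---the sides of $\Lambda_k$ have length $\to\infty$---which is the input Troubetzkoy's technique requires, and (ii) arbitrarily long non-blocking constant factors, namely the inter-frame gaps, as soon as $R_{k+1}-R_k\to\infty$. Choosing the radii so that $R_{k+1}-R_k$ grows, say, linearly keeps the gaps unbounded yet controlled; this is the two-dimensional counterpart of the power-law bound on $(g_j^\pm)$ in Theorem \ref{thm1}, and it is what I will feed into the recurrence estimate.

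Recurrence of each such $\l$ I would then obtain from the nested-frame trapping of \cite{Tr2}: each frame $\Lambda_k$ is a closed loop of cells every one of which forces a collision, so the frames confine the dynamics tightly enough that, together with the uniform \hyp ity and a Borel--Cantelli/conservation-of-$\mu$ argument run across the successive frames, $\mu$-a.e.\ \o\ returns to any set of positive \me; the controlled growth of $R_k$ supplies the summability the Borel--Cantelli step needs, just as the power-law bound does in the one-dimensional case. Uncountability and density are then immediate. For uncountability I let each radius range independently, $R_k\in\{a_k,a_k+1\}$, where $(a_k)$ is any fixed sequence with $a_{k+1}-a_k\to\infty$; distinct binary sequences place the frames at distinct cells and hence give distinct configurations, so I obtain an injection of $\{0,1\}^{\N}$ into recurrent LGs all lying within $\eps$ of $\l^\ast$. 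Letting $\l^\ast$, $N$ and the perturbation vary over all admissible choices, the collection $\mathcal S$ of configurations so produced is dense (by the approximation just described) and uncountable, and every member is \hyp, recurrent, \erg\ and has K-mixing return maps. Finally, under the hypothesis that every non-blocking horizontal constant factor of length $l$ admits a free flight $\ge l$, the unbounded inter-frame gaps force free flights $\to\infty$, so each $\l\in\mathcal S$ has infinite horizon as well.

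The hard part will be recurrence, part (b), in genuinely two dimensions. For the tube the quasi-one-dimensional coding $(g_j^\pm)$ turns recurrence into a transparent Borel--Cantelli estimate, whereas in the plane the \o\ has two unbounded directions and a single frame only partially confines it; I must check that the \emph{nested} blocking frames, acting in concert with \hyp ity, really force a.e.\ recurrence. The delicate point is to verify Troubetzkoy's hypotheses \cite{Tr2} for this explicit frame construction---above all that infinite horizon, i.e.\ the long free flights through the non-blocking gaps, is compatible with recurrence, the tension being resolved precisely by the controlled growth of $R_k$. Once recurrence is secured, \hyp ity, \erg ity and K-mixing all follow from the cited implications, and no further work is needed.
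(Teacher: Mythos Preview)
Your reduction ``recurrence $+$ hyperbolicity $\Rightarrow$ ergodicity and $K$-mixing'' is correct, but the recurrence step has a genuine gap. A single-cell-thick frame $\Lambda_k=\{\max(|n_1|,|n_2|)=R_k\}$ forces only \emph{one} collision per crossing; it does not make the escape probability small. The argument that kills wandering sets (both in \cite{Tr1,Tr2} and in Section~\ref{subs-pf1} of this paper) needs, for every $k$, a blocking barrier thick enough that the $\mu$-measure of line elements based on its inner boundary which reach its outer boundary before returning is $<1/k$. This bound comes from the \emph{recurrence of the periodic all-$\cs^1$ Lorentz gas}: a thick all-$\cs^1$ annulus looks locally like that periodic gas, so its ``escape set'' has measure tending to $0$ as the thickness grows. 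A thin frame gives nothing of the sort---an outward-pointing line element simply collides once on $\Lambda_k$ and proceeds to $\Lambda_{k+1}$---so there is no summability to feed into any Borel--Cantelli or conservativity argument. (Incidentally, the Borel--Cantelli in Corollary~\ref{cor1} is on the probability $\Pi$ over configurations, not on the dynamics; the dynamical recurrence proof is a direct wandering-set estimate, not Borel--Cantelli.)

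There is a second, related conflation. You attribute hyperbolicity to ``finitely many local configurations'' alone and use the controlled growth of $R_k$ only for recurrence. In fact the roles are reversed. The non-local ingredient in the hyperbolicity/ergodicity proof is the analogue of Lemma~\ref{lm-tech}: one needs that in time $t$ an orbit can reach at most $Ct^\alpha$ cells, hence meet at most polynomially many singularity lines. In the paper this is guaranteed by the permanent blocking circles $D_j=\{\|\bn\|=j^2\}$ (your frames, with polynomially growing radii, would also do this job). Recurrence, on the other hand, is obtained from a \emph{separate} device: arbitrarily thick blocking annuli inserted inside the $\eta_k$'s. So the paper's configurations carry two distinct layers of blocking structure---thin circles at radii $j^2$ for the singularity bound, and thick annuli of escape-measure $<1/k$ for recurrence---and your construction supplies only the first.
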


\section{Proofs}
\label{sec-proofs}

\subsection{Sketch of the proof of Theorem \ref{thm1}}
\label{subs-pf1}

Let us fix an LT as in the statement of Theorem \ref{thm1}. Its
recurrence is proved essentially in the same way as for the staircase
\bi s of \cite{Tr1}. For the sake of completeness, we give an outline
of the argument.

Here and in the remainder we are going to need a notation for the
portion of the phase space pertaining to the cell $C_n$:
\begin{equation}
  \label{def-mn}
  \ps_n := \rset{(q,v) \in \ps} {q \in C_n}.
\end{equation}

It is clear by the hypotheses that the LT contains, both forward and
backwards, arbitrarily long constant factors of the \emph{same}
blocking \con, say $\cs^1$. In this paragraph and the next, when we
say blocking factor, we will always mean a factor of configurations
$\cs^1$.  Consider a blocking factor beginning with the cell $C_{n_1}$
and ending with the cell $C_{n_2}$ ($n_2 > n_1$). Let us call
$A_{n_1}^{n_2}$ the set of all the line elements of $\ps_{n_1}$ whose
\tr ies visit a cell to the right of $C_{n_2}$, before coming back to
$C_{n_1}$. Likewise, let $A_{n_2}^{n_1}$ be the set of all the line
elements of $\ps_{n_2}$ whose \tr ies visit a cell to the left of
$C_{n_1}$, before coming back to $C_{n_2}$.  Now fix $k \in
\Z^+$. Since an LT made up entirely of cells $\cs^1$ is recurrent,
there exists a positive integer $l_k$ such that, if the length of the
factor (namely, $n_2 - n_1 + 1$) is bigger than or equal to $l_k$,
both $\mu(A_{n_1}^{n_2})$ and $\mu(A_{n_2}^{n_1})$ are smaller than
$1/k$.

Consider a wandering set $W$ and, for any $n \in \Z$, set $W_n := W
\cap \ps_n$. $W_n$ is also a wandering set. For all $k \in \Z^+$,
there exist a blocking factor of length $\ge l_k$ (say, from the cell
$C_{n_1}$ to the cell $C_{n_2}$) to the right of $C_n$ and a blocking
factor of length $\ge l_k$ (say, from $C_{n_3}$ to $C_{n_4}$) to the
left of $C_n$. By definition, the \o s of the points of $W_n$ are all
disjoint and unbounded, which means they must intersect either
$A_{n_1}^{n_2}$ or $A_{n_4}^{n_3}$ in distinct points. Therefore,
using the invariance of the \me, $\mu(W_n) \le \mu( A_{n_1}^{n_2} \cup
A_{n_4}^{n_3} ) = 2/k$. Since $k$ and $n$ are arbitrary, $W$ is a null
set.

Turning to the \hyp ity and \erg ity, once the recurrence is known,
one proceeds as in \cite{Le1} or \cite{CLS}. (A fairly accurate
summary of the whole proof is given in \cite{SLDC}, although that
article refers to LTs in dimension higher than two.) Here we limit
ourselves to mentioning which parts of the proof can be worked out
using the standard techniques for classical \hyp\ \bi s \cite{KS, LW}
and which ones need to be adapted to our particular infinite-\me\ \sy.

Focusing on the \hyp ity first, it can be seen that all the arguments
used in the proof of Theorem \ref{thm1}\emph{(a)}, with one exception,
are local, that is, depend on the value of $T$ (its invariant cones,
its distortion coefficients, etc.)\ on a neighborhood of a given
point, or a given \o. In other words, they cannot distinguish between
a finite-\me\ dispersing \bi---for which everything works well
\cite{KS}---or an infinite-\me\ one.

The only argument that is not local is the one whereby $\mu$-almost
all \o s stay sufficiently far away from the singular points of $T$
(so that the singularities of the map do not interfere with the
construction of the local stable and unstable manifolds).  The
singular points are organized in smooth curves, called
\emph{singularity lines}. Each such line corresponds to all the line
elements whose first collision occurs at a given corner of $\partial
\ta$, or tangentially to a certain smooth component of it. Hence,
there are a countable number of singularity lines. They can be counted
(or at least overestimated) in a way that, in each region of the phase
space $\ps_\bo := \rset{(q,v) \in \ps} {q \in \bo}$, where $\bo$ is a
smooth portion of the boundary as in (A1), there are at most two
singularity lines for every source of singularity (a vertex or a
tangency) ``seen'' from $\bo$. (The factor 2 comes from the fact that
there are two ways to be tangent to a smooth boundary, one for each
orientation.) Also, the length of each singularity line is bounded
above by a universal constant having to do with the size of the cell.

Let us indicate with $\mathcal{S}$ the singular set of $T$, that is,
the union of all the singularity lines described above.  It turns out
that, if Lemma \ref{lm-tech} below holds, almost all \o s approach
$\mathcal{S}$ no faster than a negative power-law in time (where by
time we mean `number of collisions'). This is enough to make the
sought argument work \cite{Le1, CLS}.

Moving on to statement \emph{(c)}, as explained in \cite{Le1}, one can
exploit a suitable Local Ergodicity Theorem for \bi s (say, the
version of \cite{LW}), which uses only local arguments except in its
most delicate part, the so-called Tail Bound.  It turns out, however,
that a version of the Tail Bound can be proved for our LTs too, if the
following result holds (cf.~ Sec.~3 of \cite{SLDC}).

\begin{lemma}
  \label{lm-tech}
  There exist constants $C, \alpha > 0$ such that, for any $n \in \Z$,
  any $U \subseteq \ps_n$, and any $t \in \Z^+$, $\bigcup_{j=0}^t
  T^j(U)$ intersects at most $Ct^\alpha$ singularity lines from
  $\mathcal{S}$. (In more descriptive terms, each \tr y with initial
  conditions in $U$ may approach at most $Ct^\alpha$ singularity lines
  of $T$, in time $t$, and this bound is uniform in $U$, if $U$ is not
  too large.)
\end{lemma}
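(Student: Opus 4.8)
The plan is to bound the number of singularity lines that a forward orbit segment $\bigcup_{j=0}^t T^j(U)$ can meet, in terms of $t$, by controlling how far from the starting cell $C_n$ a trajectory can travel in $t$ collisions. The key geometric observation is that each singularity line lives in the phase portion $\ps_\bo$ of a single smooth boundary component $\bo$, and these components are organized cell by cell. Since each cell contains only finitely many scatterers (hence finitely many components $\bo_i$ and finitely many vertices/tangencies), there is a uniform constant $c_0$ bounding the number of singularity lines associated with any single cell $C_m$. Therefore, if I can show that a trajectory starting in $U\subseteq\ps_n$ can reach at most $O(t^\alpha)$ distinct cells in $t$ collisions, I immediately get the bound $Ct^\alpha$ by summing the per-cell count.

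**First I would** make precise the claim that the spatial displacement in $t$ collisions grows at most polynomially. Here the hypotheses of Theorem~\ref{thm1} enter decisively: the sequences $(g_j^\pm)$ grow at most like a power law. A trajectory moving, say, to the right can only traverse a long run of non-blocking cells without a collision, and the lengths of such runs are exactly the $(g_j^\pm - 1)$. In $t$ collisions the particle experiences $t$ successive segments of free flight; each such segment crosses a non-blocking factor and then meets a blocking cell. The number of cells crossed during the $j$-th such segment is at most $g_{\cdot}^\pm$, and the polynomial growth of $(g_j^\pm)$ forces the cumulative displacement after $t$ collisions to be $O(t^\beta)$ for some $\beta$ depending on the power-law exponent. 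I would formalize this by indexing the blocking cells encountered and using that the distance (in cells) to the $t$-th blocking cell is the partial sum $\sum_{j\le t} g_j^\pm$, which is $O(t\cdot\max_j g_j^\pm/j)=O(t^{\beta})$ under the power-law assumption.

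**Combining** these two ingredients gives the statement: the orbit segment visits at most $O(t^\beta)$ cells, and each visited cell contributes at most $c_0$ singularity lines, so the total is at most $C t^\beta$ lines, which is the desired $Ct^\alpha$ with $\alpha=\beta$. The final remark in the lemma, that the bound be uniform in $U$ provided $U$ is ``not too large,'' I would handle by noting that the per-collision analysis is independent of the initial line element once we fix the starting cell $C_n$; the displacement bound depends only on $t$ and on the fixed sequence $(g_j^\pm)$, not on which point of $\ps_n$ we start from, so uniformity over $U\subseteq\ps_n$ is automatic. (The phrase ``not too large'' reflects that $U$ is confined to a single cell's phase space $\ps_n$.)

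**The hard part will be** making the displacement estimate genuinely rigorous rather than heuristic. A single free flight can cross many cells, and I must confirm that a free flight of $k$ cells truly corresponds to crossing a non-blocking factor of length roughly $k$, so that the free-flight lengths inherit the polynomial growth of $(g_j^\pm)$. This requires the structural fact (A5): a blocking cell forces a collision, so the particle cannot overshoot a blocking cell without registering a collision there. One subtlety is a trajectory that oscillates back and forth across the same region, potentially revisiting cells and singularity lines; but revisiting only helps, since it does not increase the \emph{set} of distinct cells reached, and the maximal reach in each direction is still governed by the partial sums of $(g_j^\pm)$. I would therefore track the running maximum and minimum cell index separately for the two directions and bound each by the corresponding partial sum, which is the crux of converting the combinatorial growth hypothesis into the analytic bound $Ct^\alpha$.
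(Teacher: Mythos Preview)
Your displacement estimate---that after $t$ collisions the orbit is confined to cells with indices between $f_t^- := -\sum_{i\le t} g_i^-$ and $f_t^+ := \sum_{i\le t} g_i^+$, hence to $O(t^\beta)$ cells---is exactly what the paper does and is correct. The reduction to $U=\ps_0$ and the use of (A5) to force a collision in every blocking cell are also the right moves.

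The gap is in your claim that ``there is a uniform constant $c_0$ bounding the number of singularity lines associated with any single cell $C_m$.'' You deduce this from the fact that each cell contains only finitely many boundary pieces $\bo_i$ and finitely many vertices/tangencies. But a singularity line in $\ps_\bo$ is indexed not by a vertex or tangency lying \emph{in the same cell as $\bo$}; it is indexed by a singularity source \emph{visible from $\bo$}, i.e., reachable by a single free flight. In a finite-horizon billiard that number is uniformly bounded, which is the intuition you are importing; in the present infinite-horizon setting it is not. If $\bo$ sits adjacent to a non-blocking factor of length $L$, then from $\bo$ one sees of order $L$ scatterers, and $\ps_\bo$ carries of order $L$ singularity lines. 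There is no uniform $c_0$, and the conclusion ``$O(t^\beta)$ cells times $c_0$ lines per cell'' breaks down.

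The repair is to bound the number of singularity lines in each $\ps_\bo \subset A_t$ by the number of cells visible from $A_t$: since one more free flight again terminates at a blocking cell, any such $\bo$ can see at most the cells between $C_{f_{t+1}^-}$ and $C_{f_{t+1}^+}$, giving $O(f_{t+1}^+ - f_{t+1}^-)$ singularity lines per $\ps_\bo$. Multiplying this by the $O(f_t^+ - f_t^-)$ boundary pieces contained in $A_t$ yields the bound. So the correct estimate is a \emph{product} of two polynomial factors, and the exponent $\alpha$ is of order $2\beta$ rather than $\beta$.
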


\begin{proof}
  Let us define $f_j^\pm := \pm \sum_{i=0}^j g_i^\pm$, so that $f_j^+$
  is the location of the blocking cell to the right of the
  $j^\mathrm{th}$ non-blocking factor, on the right side of the tube
  w.r.t.\ $C_0$; while $f_j^-$ is the location of the blocking cell to
  the left of the $j^\mathrm{th}$ non-blocking factor, on the left
  side of the tube; cf.\ definition (\ref{gj-pm}). It follows from the
  hypotheses of Theorem \ref{thm1} that both sequences $( |f_j^\pm|
  )_{j \in \N}$ are bounded by a power-law in $j$.
  
  Coming to the statement of the lemma, we may assume that $U = \ps_0$
  (in fact, proving the result for $U' := \ps_n$ automatically proves
  it for $U$; then, choosing $n=0$ is no loss of generality because a
  translation of the tube does not change our argument---cf.\ Remark
  \ref{rmk1}).
  
  By (A5), the configuration space \tr ies of length $t$, with initial
  conditions in $\ps_0$, cannot go further left than the cell
  $C_{f_t^-}$ or further right than $C_{f_t^+}$. The corresponding
  phase space \o s are thus contained in
  \begin{equation}
    A_t := \bigcup_{n = f_t^-}^{f_t^+} \ps_n.
  \end{equation}
  A line element in $A_t$ can ``see'' at most those cells that range
  from $C_{f_{t+1}^-}$ to $C_{f_{t+1}^+}$. Therefore, the number of
  singularity lines of $T$ in each set of the type $\ps_\bo \subset
  A_t$ ($\ps_\bo$ was defined earlier) does not exceed $C' (f_{t+1}^+
  - f_{t+1}^- + 1)$, for some $C'>0$.  But there are at most $C''
  (f_t^+ - f_t^- + 1)$ sets of that type. The product of these two
  estimates, which, as shown earlier, grows no faster than a power-law
  in $t$, is an upper bound for the number of singularity lines from
  $\mathcal{S}$ in $A_t$.
\end{proof}

Once we have local \erg ity (all but countably many points in phase
space have a neighborhood contained in one \erg\ component), global
\erg ity is easily shown.  Also, the assertion about the $K$-mixing
first-return map is proved as in \cite{SLDC}, Sec.~4.

\subsection{Sketch of the proof of Theorem \ref{thm2}}
\label{subs-pf2}

Let us endow $\Z^2$ with the norm $\| \bn \| = \| (n_1, n_2) \| :=
|n_1| + |n_2|$. For $j \in \Z^+$, set $D_j := \rset{\bn \in \Z^2} {\|
  n \| = j^2}$ (this set resembles the border of a rhombus in
$\Z^2$). Given $i \in \Z^+$, define
\begin{equation}
  \label{def-zi}
  Z_i := \Z^2 \setminus \bigcup_{j \ge i} D_j
\end{equation}
and
\begin{equation}
  \label{def-li}
  \mathcal{L}_i := \lset{ \l = ( \l_\bn) \in \Omega^{\Z^2} }
  { \l_\bn = 1, \, \forall \bn \not\in Z_i }.
\end{equation}
In other words, $\mathcal{L}_i$ is the set of all the global \con s
which have ``blocking circles'' (namely, circles filled with cells of
type $\cs^1$) at all radii $j^2$, with $j \ge i$. Clearly,
$\mathcal{L}_i \cong \Omega^{Z_i}$ in a natural sense. Note that
$\mathcal{L}_i \subset \mathcal{L}_{i+1}$ and that $\bigcup_i
\mathcal{L}_i$ is dense in $\Omega^{\Z^2}$. In each $\mathcal{L}_i$ we
apply the method of \cite{Tr2} to construct a $G_\delta$ dense set
$\mathcal{R}_i$ of recurrent Lorentz gases. Let us sketch this method.

In what follows, whenever we mention circles, balls, annuli, we will
always mean circles, balls, annuli in $\Z^2$, relative to the norm $\|
\cdot \|$ and centered in the origin. Fix $i$ and denote by $\xi$ a
\con\ of cells in a ball of $Z_i$, equivalently, a vector of
$\Omega^B$, where $B$ is the restriction to $Z_i$ of a ball in $\Z^2$;
$B$ will be referred to as the \emph{support} of $\xi$.  Clearly,
there are countably many such (finite) \con s, so we can index them as
$( \xi_k )_{k \in \Z^+}$ (the dependence on $i$ is suppressed). For
each such $k$, let us construct a finite \con\ $\eta_k$ such that:
\begin{itemize}
\item the support of $\eta_k$ is a ball of radius $\rho > \rho_1$,
  where $\rho_1$ is radius of the support of $\xi_k$, and the
  restriction of $\eta_k$ to the support of $\xi_k$ is $\xi_k$;
\item there exists a positive integer $\rho_2 \in (\rho_1, \rho)$ such
  that, if $\rho_1 < \|\bn\| \le \rho_2$, $\l_\bn = 1$, i.e., there is
  a ``blocking annulus'' (of type $\cs^1$) of radii $\rho_1, \rho_2$;
  $\rho_2$ must be so large that the phase space \me\ of all the line
  elements based in the inner circle of the annulus, whose \tr ies
  reach the outer circle before coming back to inner circle, does not
  exceed $1/k$;
\item $\rho - \rho_2 \ge k$ and, for $\bn \not\in Z_i$ with $\rho_2 <
  \|\bn\| \le \rho$, $\l_\bn = m$, i.e., the outer part of the \con\
  $\eta_k$ (except for the blocking circles $D_j$, which do not belong
  to $Z_i$) is a non-blocking annulus (of type $\cs^m$) with thickness
  $\ge k$.
\end{itemize}
Then, let us denote by $\mathcal{A}_i^k$ the cylinder in
$\mathcal{L}_i$ defined by all the \con s in $Z_i$ that coincide with
$\eta_k$ on its support. It is not hard to show that $\mathcal{A}_i^k$
is open in $\mathcal{L}_i$ w.r.t.\ the metric (\ref{dist}). Hence
\begin{equation}
  \label{ri}
  \mathcal{R}_i := \bigcap_{n \in \Z^+} \bigcup_{k \ge n} \mathcal{A}_i^k
\end{equation}
is a $G_\delta$ set that is clearly dense in $\mathcal{L}_i$. The
recurrence of any LG $\l \in \mathcal{R}_i$ is proved essentially as
in Section \ref{subs-pf1}, by showing that, if $W$ is a wandering set,
then $\mu(W \cap \ps_\bn) \le 1/k$, for all $\bn \in \Z^2$ and $k$
large enough. Also, the construction of the non-blocking annulus in
each $\eta_k$ and the hypothesis on the non-blocking horizontal
constant factors (cf.~Theorem \ref{thm2}) imply that $\l$ has infinite
horizon.

The presence of the blocking circles $D_j$ is necessary to ensure that
the set of all the \tr ies with initial positions, say, in $C_\bn$,
stay confined, within time $t\in \N$, to a portion of the LG that
comprises at most $C t^\alpha$ cells, for some $C, \alpha > 0$. This
makes the equivalent of Lemma \ref{lm-tech} hold, which in turn yields
\hyp ity, \erg ity and the other statements of Theorem \ref{thm2}.

Finally, $\mathcal{R} := \bigcup_{i} \mathcal{R}_i$ is a dense
uncountable set of LGs in $\Omega^{\Z^2}$ that possess all the sought
properties.


\begin{thebibliography}{SLDC}
\bibitem[BBP]{BBP} {\sc C.~Boldrighini, L.~A.~Bunimovich,
  A.~Pellegrinotti}, {\em Some estimates for 2-dimensional infinite
  and bounded dilute random Lorentz gases}, J.\ Statist.\ Phys.\ 109
  (2002), 729--745.

\bibitem[BBS]{BBS} {\sc C.~Boldrighini, L.~A.~Bunimovich,
  Ya.~G.~Sinai}, {\em On the Boltzmann equation for the Lorentz
  gas}, J.\ Statist.\ Phys.\ 32 (1983), 477--501.

\bibitem[CD]{CD} {\sc N.~Chernov, D.~Dolgopyat}, {\em Hyperbolic
  billiards and statistical physics}, International Congress of
  Mathematicians. Vol.\ II, 1679--1704, Eur.\ Math.\ Soc., Z\"urich,
  2006.

\bibitem[CLS]{CLS} {\sc G.~Cristadoro, M.~Lenci, M.~Seri}, {\em
  Recurrence for quenched random Lorentz tubes}, Chaos 20 (2010),
  023115 (erratum: Chaos 20 (2010), 049903).
  
\bibitem[DSV]{DSV} {\sc D.~Dolgopyat, D.~Sz\'asz, T.~Varj\'u}, {\em
  Limit theorems for locally perturbed planar Lorentz processes},
  Duke Math.\ J.\ 148 (2009), 459--499.

\bibitem[Ga]{Ga} {\sc G.~Gallavotti}, {\em Divergences and the
  approach to equilibrium in the Lorentz and the wind-tree models},
  Phys.\ Rev.\ 185 (1969), 308--322.

\bibitem[Gr]{Gr} {\sc K.~E.~Grew}, {\em Thermal diffusion in
  hydrogen-deuterium mixtures}, Proc.\ R.\ Soc.\ Lond.\ Ser.\ A
  Math.\ Phys.\ Sci.\ 178 (1941), 390--399.

\bibitem[HIV]{HIV} {\sc H.~R.~Heath, T.~L.~Ibbs, N.~E.~Wild}, {\em The
  diffusion and thermal diffusion of hydrogen-deuterium, with a note
  on the thermal diffusion of hydrogen-helium}, Proc.\ R.\ Soc.\
  Lond.\ Ser.\ A Math.\ Phys.\ Sci.\ 178 (1941), 380--389.
  
\bibitem[J]{J} {\sc G.~Joos}, {\em Lehrbuch der Theoretischen Physik}, 
  Akademische Verlagsgesellschaft, Leipzig, 1932.
  
\bibitem[KS]{KS} {\sc A.~Katok, J.-M.~Strelcyn} (in collaboration with
  {\sc F.~Ledrappier and F.~Przytycki}), {\em Invariant manifolds,
  entropy and billiards; smooth maps with singularities}, Lectures
  Notes in Mahematics 1222, Springer-Verlag, Berlin-New York, 1986.

\bibitem[K]{K} {\sc A.~J.~Kox}, {\em H.~A.~Lorentz's contribution to
  kinetic gas theory}, Ann.\ Sci.\ 47 (1990), 591-606.

\bibitem[Le1]{Le1} {\sc M.~Lenci}, {\em Aperiodic Lorentz gas:
  recurrence and ergodicity}, Ergodic Theory Dynam.\ Systems 23
  (2003), 869--883.
 
\bibitem[Le2]{Le2} {\sc M.~Lenci}, {\em Typicality of recurrence for
  Lorentz gases}, Ergodic Theory Dynam.\ Systems 26 (2006),
  799--820.
  
\bibitem[Le3]{Le3} {\sc M.~Lenci}, {\em On infinite-volume mixing},
  Comm.\ Math.\ Phys.\ 298 (2010), 485-514.
  
\bibitem[LW]{LW} {\sc C.~Liverani, M.~Wojtkowski}, {\em Ergodicity 
  in Hamiltonian systems}, in: Dynamics Reported: Expositions in 
  Dynamical Systems (N.S.), vol.~4, Springer-Verlag, Berlin, 1995.
  
\bibitem[Lo]{Lo} {\sc H.~A.~Lorentz}, {\em The motion of electrons in
  metallic bodies I, II, and III}, Koninklijke Akademie van
  Wetenschappen te Amsterdam, Section of Sciences, 7 (1905), 438--453,
  585--593, 684--691.

\bibitem[SLDC]{SLDC} {\sc M.~Seri, M.~Lenci, M.~Degli Esposti,
  G.~Cristadoro}, {\em Recurrence and higher ergodic properties for
  quenched random Lorentz tubes in dimension bigger than two}, to
  appear in J.\ Stat.\ Phys.\ (2011).
 
\bibitem[Tr1]{Tr1} {\sc S.~Troubetzkoy}, {\em Recurrence in generic
  staircases}, to appear in Discr.\ Cont.\ Dynam.\ Systems (2011).
 
\bibitem[Tr2]{Tr2} {\sc S.~Troubetzkoy}, {\em Typical recurrence for
  the Ehrenfest wind-tree model}, J.\ Stat.\ Phys.\ 141 (2010),
  60--67.
\end{thebibliography}
\end{document}